\documentclass{siamltex}
\usepackage{amsmath,amssymb,amsfonts,latexsym,graphicx,color,multirow,footnote}

\newtheorem{algorithm}{Algorithm}

\title{A contour-integral based method for counting the eigenvalues inside a region in the complex plane}

\author{Guojian Yin\thanks{Shenzhen Institutes of Advanced Technology, Chinese Academy of Science, Shenzhen, P.R. China ({\tt guojianyin@gmail.com}).} }
\begin{document}
\maketitle

\begin{abstract}
In many applications, the information about the number of eigenvalues inside a given region is required. In this paper, we propose a contour-integral based method
for this purpose. The new method is motivated by two findings. There exist methods for estimating the number of eigenvalues inside a region in the complex plane. But our method is able to compute the number of eigenvalues inside the given region exactly. An appealing feature of our method is that it can integrate with the recently developed contour-integral based eigensolvers  to help them detect whether all desired eigenvalues are found. Numerical experiments are reported to show the viability of our new method.
\end{abstract}

\begin{keywords}
eigenvalue, generalized eigenvalue problems, contour integral, spectral projection
\end{keywords}

\begin{AMS}
15A18, 58C40, 65F15
\end{AMS}

\section{Introduction}
Consider the generalized eigenvalue problem
\begin{equation}\label{eq:1-1}
 A {\bf x} = \lambda  B {\bf x},
\end{equation}
where  $ A,  B \in {\mathbb C}^{n \times n}$. The scalars $\lambda \in \mathbb{C}$
and the associated nonzero vectors ${\bf x} \in {\mathbb C}^n$  are
called the eigenvalues and eigenvectors, respectively. Let $\mathcal{D}$ be a disk in the complex plane and enclosed by circle $\Gamma$. In this paper,
we want to develop an approach for computing the number of eigenvalues of (\ref{eq:1-1}) inside $\mathcal{D}$ exactly. Due to the M\"obius transformation, the resulting approach can be adapted to the union of intersections of arbitrary half plane and (complemented) disks, and so a rather general region \cite{BDG97}.

In many applications, it is required to know the number of eigenvalues inside a prescribed region in the complex plane \cite{KP11, LYS14}. For example, it is a prerequisite of the eigensolvers based on divide-and-conquer techniques \cite{BDDRV00, DPS13}. To get this number, the most straightforward way is to compute all eigenvalues inside the target region by some method, such as the rational Krylov subspace method \cite{Ruhe98}. However, this way is always time-consuming and not effective, because we are only interested in the number of eigenvalues rather than the eigenvalues themselves.  

When $A$ and $B$ are Hermitian matrices with $B$ being positive definite, i.e., $zB-A$ is a definite matrix pencil \cite{BDDRV00}, it is well-known that the eigenvalues of (\ref{eq:1-1}) are real-valued and lie on the real line \cite{saad, stewart}. Assume that we want to know the number of eigenvalues inside interval $[a, b]$. The standard method works as follows. Compute two LDL decompositions: $A-aB = L_aD_aL_a^*$ and $A-bB = L_bD_bL_b^*$. Then the difference between the numbers of negative entries in the diagonal of $D_a$ and $D_b$ is exactly the number of eigenvalues inside $[a, b]$. The derivation of this method takes advantage of the Sylvester law of inertia, see \cite{DPS13, GLS94,  saad} for more details. Obviously, the efficiency of this method depends on the accurate computation of two related LDL decompositions \cite{DPS13}. Computing the LDL decomposition requires floating point operations of order $\mathcal{O}(n^3)$ \cite{gvl};  as a result, it becomes computationally prohibitive for large-scale problems. When (\ref{eq:1-1}) comes to the non-Hermitian case, to the best of our knowledge, there is no specific method for exactly computing the number of eigenvalues of (\ref{eq:1-1}) inside $\mathcal{D}$.

The contour-integral based eigenslovers \cite{polizzi,  ss, ST07, YCY14} are recent efforts for computing the eigenvalues inside a prescribed region.
The information about the number of eigenvalues inside the region of interest is crucial to the practical implementation of these eigensolvers. Due to this, recently some methods based on contour-integral were proposed to get estimations of this number. Some of this kind of methods make use of the stochastic estimation of the trace of spectral operator associated with the eigenvalues inside the given region \cite{DPS13, futa, YCY14}. However, they may be unreliable in some cases, for instance when the matrices $A$ and $B$ are ill-conditioned. In \cite{TP13}, another kind of contour-integral based estimate method was presented under the assumption that matrices $A$ and $B$ are Hermitian and $B$ is positive definite. It should be pointed out that all these existing contour-integral based methods always just provide approximations for the exact number of eigenvalues inside the given region.

In this paper, we present a contour-integral based method, which can exactly compute the number of eigenvalues of (\ref{eq:1-1}) inside $\mathcal{D}$. The derivation of the proposed method requires the eigenvalues of (\ref{eq:1-1}) are semi-simple, namely, there are $n$  independent eigenvectors, which is always the case in practical situations. Our new method is motivated by two findings. The first finding comes from using the Gauss-Legendre quadrature rule to approximately compute the spectral operator constructed by a particular contour integral. More details will be discussed in Section 2. The second one is devoted to avoiding the computation of the Weierstrass canonical form of matrix pencil $zB-A$ when using the first finding to count the eigenvalues inside $\mathcal{D}$. We will detail the second finding in Section 4. Since our new method is also based on contour integral, it keeps the promising features of the usual contour-integral based eigensolvers, such as having a good potential to be implemented on a high-performance parallel architecture.  Moreover, it can integrate with the contour-integral based eigensolvers \cite{polizzi,  ss, ST07, YCY14} to help them determine whether all desired eigenvalues are found when they stop.

The paper is organized as follows. In Section 2, we present the first finding, which is derived from using the Gauss-Legendre quadrature rule to approximately compute the  spectral projector constructed by an integral contour. Since our method needs the help of a technique proposed in \cite{YCY14},  we briefly describe the technique in Section 3. In Section 4, we first detail the second finding, and then give the resulting method for counting the eigenvalues inside $\mathcal{D}$. Numerical experiments are reported in Section 5 to show the viability of our new method.

Throughout the paper, the following notation and terminology are used. The real part of a complex number $a$ is denoted by $\Re(a)$. We use $\sqrt{-1}$ to denote the imaginary unit. The subspace spanned by the columns of matrix $X$ is denoted by ${\rm span}\{ X\}$. The rank of $X$ is denoted by $\rank(X)$. The algorithms are presented in \textsc{Matlab} style.

\section{Approximate spectral projector}\label{sec:feast}
Our discussion starts with the spectral projector associated with the eigenvalues inside $\Gamma$, which is constructed by a contour integral defined as
\begin{equation}\label{eq:2-2-1}
Q = \frac{1}{2\pi\sqrt{-1}} \oint_\Gamma  (zB-A)^{-1}B  dz.
\end{equation}

A matrix pencil $ z  B-A$ is called regular if
${\rm det}(zB-A)$ is not identically zero for all $z \in \mathbb{C}$ \cite{demmel, MS73}. Below is a generalization of the Jordan canonical form to the regular matrix pencil case.

\begin{theorem}[Weierstrass canonical form \cite{G59}]
Let $zB-A$ be a regular matrix pencil of order $n$. Then there exist nonsingular matrices
$S$ and $T \in \mathbb{C}^{n\times n}$ such that
\begin{equation}\label{eq:8-3-1}
TAS = \begin{bmatrix}
  J_d    & 0   \\
   0   & I_{n-d}
\end{bmatrix}  \quad {\rm and} \quad TBS= \begin{bmatrix}
   I_d   & 0   \\
  0    & N_{n-d}
\end{bmatrix},
\end{equation}
where $J_d$ is a $d\times d$ matrix in Jordan canonical form
with its diagonal entries corresponding to the eigenvalues of $zB-A$, $N_{n-d}$ is an $(n-d)\times (n-d)$ nilpotent matrix also in Jordan canonical form, and $I_d$ denotes the identity matrix of order $d$.
\end{theorem}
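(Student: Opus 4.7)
The plan is to reduce the pencil statement to the ordinary Jordan decomposition of a single matrix. Since $zB-A$ is regular, the polynomial $\det(zB-A)$ is not identically zero, so there exists $\mu\in\mathbb{C}$ for which $\mu B-A$ is nonsingular. Set $M=(\mu B-A)^{-1}B$ and record the factorization
\[
zB-A=(\mu B-A)\bigl[(z-\mu)M+I\bigr].
\]
This identity links the Jordan structure of $M$ to the eigenstructure of the pencil: a finite eigenvalue $\lambda$ of $zB-A$ with eigenvector $x$ satisfies $Mx=(\mu-\lambda)^{-1}x$, while the generalized eigenspace of $M$ at $0$ will encode the structure of the pencil at infinity.

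First I would block-diagonalize $M$ by separating its invariant subspace at the eigenvalue $0$ from the complementary one. Choose an invertible $P$ so that
\[
P^{-1}MP=\begin{bmatrix} J_1 & 0 \\ 0 & N_1 \end{bmatrix},
\]
where $J_1\in\mathbb{C}^{d\times d}$ is invertible and in Jordan canonical form and $N_1\in\mathbb{C}^{(n-d)\times(n-d)}$ is nilpotent and in Jordan canonical form. Taking $S_0=P$ and $T_0=P^{-1}(\mu B-A)^{-1}$, and writing $A=\mu B-(\mu B-A)$, a direct computation gives
\[
T_0BS_0=\begin{bmatrix} J_1 & 0 \\ 0 & N_1 \end{bmatrix},\qquad T_0AS_0=\begin{bmatrix} \mu J_1-I_d & 0 \\ 0 & \mu N_1-I_{n-d} \end{bmatrix}.
\]

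To reach the exact canonical form asserted, I would right-multiply $S_0$ by the block-diagonal matrix $\operatorname{diag}\bigl(J_1^{-1},(\mu N_1-I_{n-d})^{-1}\bigr)$; the second block is invertible because $N_1$ is nilpotent. This produces a new pair whose $B$-part is $\operatorname{diag}(I_d,\widetilde N)$ with $\widetilde N=N_1(\mu N_1-I_{n-d})^{-1}$, and whose $A$-part is $\operatorname{diag}(\mu I_d-J_1^{-1},I_{n-d})$. Expanding $(\mu N_1-I_{n-d})^{-1}$ as a finite Neumann series in $N_1$ shows $\widetilde N$ is a polynomial in $N_1$ with zero constant term, hence nilpotent. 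One further similarity within each diagonal block brings $\mu I_d-J_1^{-1}$ and $\widetilde N$ into Jordan canonical form, delivering $J_d$ and $N_{n-d}$ respectively, and absorbing these similarities into $S$ and $T$ completes the construction.

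The main obstacle will be the careful bookkeeping of these three successive similarities while checking that the resulting $J_d$ and $N_{n-d}$ have the Jordan structure claimed, and that $d$ equals the number of finite eigenvalues of $zB-A$ counted with algebraic multiplicity. The eigenvalue count follows from the bijection $\lambda\leftrightarrow(\mu-\lambda)^{-1}$ between finite eigenvalues of the pencil and nonzero eigenvalues of $M$, which also implies that the choice of $\mu$ does not affect the block sizes.
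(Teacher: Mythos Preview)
The paper does not supply a proof of this theorem; it is quoted as a classical result with a citation to Gantmacher~\cite{G59} and then used as a black box. So there is no ``paper's own proof'' to compare against.

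Your argument is a correct and standard derivation of the Weierstrass form. The reduction to the Jordan decomposition of $M=(\mu B-A)^{-1}B$ is exactly the usual device: the identity $zB-A=(\mu B-A)\bigl[(z-\mu)M+I\bigr]$ shows that the finite pencil eigenvalues $\lambda$ correspond bijectively to the nonzero eigenvalues $(\mu-\lambda)^{-1}$ of $M$, while the nilpotent part of $M$ captures the eigenvalue at infinity. Your three normalization steps are sound; in particular, the final block similarities that bring $\mu I_d-J_1^{-1}$ and $\widetilde N$ to Jordan form can be absorbed simultaneously into $T$ and $S$ because they act as $T\mapsto\operatorname{diag}(Q^{-1},R^{-1})T$, $S\mapsto S\operatorname{diag}(Q,R)$, which preserves the identity blocks already achieved. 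The one point you flag as an obstacle, that the diagonal of $J_d$ really consists of the finite pencil eigenvalues, follows immediately from the eigenvalue bijection above since $\mu I_d-J_1^{-1}$ has spectrum $\{\mu-\nu^{-1}:\nu\in\sigma(J_1)\}=\{\lambda_i\}$.
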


Assume that there are $n$ independent eigenvectors, which implies $J_d$ is a diagonal matrix and $N_{n-d}$ is a zero matrix. Let $J_d = \diag\{\lambda_1,\lambda_2,\ldots, \lambda_d\}$, with $\lambda_i$ being the (finite) eigenvalues \cite{demmel}. Here the $\lambda_i$
are not necessarily distinct and can be repeated according to
their multiplicities.

For $z \ne \lambda_i$, 
the matrix $(zI_d-J_d)$ is invertible. Hence, according to (\ref{eq:8-3-1}), the resolvent operator
\begin{eqnarray}\label{eq:8-3-4}
(zB-A)^{-1}B & = & S\begin{bmatrix}
  (zI_d-J_d)^{-1}    &   0 \\
    0  &  (zN_{n-d}-I_{n-d})^{-1}
\end{bmatrix} TB \nonumber \\
 & = & S\begin{bmatrix}
  (zI_d-J_d)^{-1}    &   0 \\
    0  &  (zN_{n-d}-I_{n-d})^{-1}
\end{bmatrix} \begin{bmatrix}
  I_d    &   0 \\
    0  &  N_{n-d}
\end{bmatrix}S^{-1}\nonumber \\
 & = & SD(z)S^{-1},
\end{eqnarray}
where
\begin{equation}\label{eq:2-1-25}
D(z) = \begin{bmatrix}
  (zI_d-J_d)^{-1}    &   0 \\
    0  & 0
\end{bmatrix},
\end{equation}
and the  diagonal block $(zI_d-J_d)^{-1}$ is of the form:
\begin{equation}\label{equ:7-26-3}
(zI_d-J_d)^{-1}= \left[ \begin{array}{cccc}
 \displaystyle{\frac{1}{z-\lambda_1}}& 0 & \cdots &
0\\
0 & \displaystyle{\frac{1}{z-\lambda_2}} & \cdots & 
0\\
\vdots& \vdots& \ddots & 
\vdots\\
0 & 0 & \cdots 
&\displaystyle{\frac{1}{z-\lambda_d}}\\
\end{array}
\right].
\end{equation}

Assume that there are $s$ eigenvalues enclosed by $\Gamma$, without loss of generality,  let them be $\{\lambda_1, \ldots, \lambda_s\}$.
Then, according to the residue theorem in complex analysis \cite{Rudin}, it follows from (\ref{eq:8-3-4})--(\ref{equ:7-26-3})  that
\begin{equation}\label{equ:7-26-4}
 Q  = S\left[\frac{1}{2\pi\sqrt{-1}} \oint_\Gamma D(z)dz \right]S^{-1}=  S\left[ \begin{array}{cc}
  I_s & 0\\
0 & 0
 \end{array} \right]  S^{-1} = S_{(:, 1:s)}(S^{-1})_{(1:s, :)}.
\end{equation}
It is easy to verify that $ Q^2 =  Q$, which implies that $Q$ is a spectral projector onto  the eigenspace ${\rm span}\{S_{(:, 1:s)}\}$ corresponding to $\{\lambda_1,\lambda_2,\ldots, \lambda_s\}$ \cite{YCY14}.

In view of (\ref{equ:7-26-4}), the spectral projector $Q$ can be obtained via computing the Weierstrass canonical form (cf. (\ref{eq:8-3-1})). However, it is well-known that the Weierstrass canonical form is not suitable for numerical computation \cite{BDDRV00, demmel}. According to the expression (\ref{eq:2-2-1}), an alternative way is to compute $Q$ numerically by a quadrature scheme. In our method, the quadrature scheme is restricted to the Gauss-Legendre quadrature rule \cite{DR84}. Let $c$ and $\rho$ be the center and the radius of circle $\Gamma$, respectively. Applying the $q$-point Gauss-Legendre quadrature rule to (\ref{eq:2-2-1}) yields
\begin{equation}\label{eq:8-4}
 Q  \approx \widetilde{Q}= \frac{1}{2} \sum^{q}_{j=1}\omega_j(z_j-c)(z_j  B- A)^{-1} B = S\left[\frac{1}{2} \sum^{q}_{j=1}\omega_j(z_j-c)D(z_j)\right]S^{-1}.
\end{equation}
Here $z_j=c+\rho e^{\sqrt{-1}\theta_j}$, $\theta_j=(1+ t_j)\pi$, and
$t_j$ is the $j$th Gaussian node with associated weight $\omega_j$. We remark that $\widetilde{Q}$ is an approximate spectral projector. Let
\begin{equation}\label{eq:8-1-1}
D = \frac{1}{2} \sum^{q}_{j=1}\omega_j(z_j-c)D(z_j).
\end{equation}
Comparing to (\ref{equ:7-26-4}) and (\ref{eq:8-4}), we see that $D$  is an approximation to $\left[ \begin{array}{cc}
  I_s & 0\\
0 & 0
 \end{array} \right]$.

Let $\mu = c+r e^{\sqrt{-1} \theta}$, where $r \in [0, \infty)$ and $\theta \in (-\pi, \pi]$. Define
\begin{equation}\label{eq:2-1-1}
\psi (\mu) = \frac{1}{2\pi \sqrt{-1}} \oint_\Gamma\frac{1}{z-\mu}dz.
\end{equation}
According to the residue theorem, we know that $\psi (\mu) = 1$ when $\mu$ is located inside $\Gamma$, and $\psi (\mu) = 0$ when $\mu$ is located outside $\Gamma$. If $\psi( \mu)$ is computed approximately by the $q$-point Gauss-Legendre quadrature rule, then we have
\begin{eqnarray}
\psi(\mu) & \approx &\widetilde{\psi}(\mu)= \frac{1}{2}\sum^q_{j=1} \omega_j (z_j- c)\frac{1}{z_j-\mu}\nonumber \\
 & = & \frac{1}{2}\sum^q_{j=1} \omega_j \frac{-\rho \cos(t_j\pi)-\sqrt{-1} \rho \sin(t_j \pi)}{-(\rho \cos (t_j\pi) +r \cos \theta)- \sqrt{-1} (\rho \sin (t_j \pi) +r \sin \theta)} \label{eq:2-1-2}.
\end{eqnarray}
It was shown that $\widetilde{\psi}(\mu) \geqslant \frac{1}{2}$ if $\mu$ is real-valued and located inside $\Gamma$ \cite{TP13}. This observation sheds light on the following theorem.


\begin{theorem}\label{Th:2-1-3}
If $\mu$ is enclosed by $\Gamma$, then the real part of $\widetilde{\psi}(\mu)$ satisfies
\begin{equation*}\label{eq:2-1-9}
\Re[\widetilde{\psi}(\mu)]> \dfrac{1}{2}.
\end{equation*}
If $\mu$ is located outside $\Gamma$, we have
$$\Re[\widetilde{\psi}(\mu)]< \dfrac{1}{2}.$$
\end{theorem}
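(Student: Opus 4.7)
The plan is to reduce the statement to a transparent sign computation by factoring out the radial ratio $\alpha := |\mu - c|/\rho$, which is less than $1$ exactly when $\mu$ is enclosed by $\Gamma$ and greater than $1$ when $\mu$ lies outside. Writing $\mu - c = r e^{\sqrt{-1}\theta}$ and recalling $z_j - c = \rho e^{\sqrt{-1}\theta_j}$, I would rewrite each summand in (\ref{eq:2-1-2}) as
\begin{equation*}
\omega_j \frac{z_j - c}{z_j - \mu} \;=\; \frac{\omega_j}{1 - \alpha\, e^{\sqrt{-1}\phi_j}}, \qquad \phi_j := \theta - \theta_j,
\end{equation*}
so that $\widetilde{\psi}(\mu) = \frac{1}{2}\sum_j \omega_j / (1 - \alpha e^{\sqrt{-1}\phi_j})$. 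This form makes $\alpha$ the only parameter distinguishing the inside and outside cases.

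Next, I would extract the real part using $1/(1 - \alpha e^{\sqrt{-1}\phi}) = (1 - \alpha e^{-\sqrt{-1}\phi})/|1 - \alpha e^{\sqrt{-1}\phi}|^2$, giving
\begin{equation*}
\Re[\widetilde{\psi}(\mu)] \;=\; \frac{1}{2}\sum_{j=1}^q \omega_j\, \frac{1 - \alpha \cos\phi_j}{1 - 2\alpha\cos\phi_j + \alpha^2}.
\end{equation*}
Because the Gauss--Legendre weights on $[-1,1]$ satisfy $\sum_j \omega_j = 2$, one can rewrite $\frac{1}{2} = \frac{1}{4}\sum_j \omega_j$ and combine the two sums term-by-term. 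The numerator $2(1 - \alpha\cos\phi_j) - (1 - 2\alpha\cos\phi_j + \alpha^2)$ telescopes, the $\cos\phi_j$ contributions canceling, to leave the clean identity
\begin{equation*}
\Re[\widetilde{\psi}(\mu)] - \frac{1}{2} \;=\; \frac{1 - \alpha^2}{4}\sum_{j=1}^{q} \frac{\omega_j}{1 - 2\alpha\cos\phi_j + \alpha^2}.
\end{equation*}

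To finish, I would observe that every Gauss--Legendre weight $\omega_j$ is strictly positive and every denominator equals $|1 - \alpha e^{\sqrt{-1}\phi_j}|^2 \geq (1-\alpha)^2$, which is strictly positive whenever $\mu$ is not a quadrature node on $\Gamma$ (automatic if $\mu$ is strictly inside or strictly outside $\Gamma$). Thus the sum on the right is a strictly positive real number, and the sign of $\Re[\widetilde{\psi}(\mu)] - \frac{1}{2}$ coincides with the sign of $1 - \alpha^2$: positive when $|\mu - c| < \rho$ and negative when $|\mu - c| > \rho$, yielding both claimed inequalities. The only step that demands care is the algebraic telescoping that produces the factor $1 - \alpha^2$; one must verify that the cross terms in $2(1 - \alpha\cos\phi_j) - (1 - 2\alpha\cos\phi_j + \alpha^2)$ cancel exactly, but beyond that the argument is essentially bookkeeping.
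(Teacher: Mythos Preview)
Your proof is correct and follows essentially the same approach as the paper's: both compute $\Re[\widetilde{\psi}(\mu)]-\tfrac{1}{2}$ term by term, factor out $\rho^2-r^2$ (your $1-\alpha^2$) from a manifestly positive quantity, and invoke $\omega_j>0$ with $\sum_j\omega_j=2$. Your use of the normalized ratio $\alpha=r/\rho$ and the factoring of $1-\alpha^2$ from the entire sum rather than each summand is a cosmetic streamlining, not a different idea.
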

\begin{proof}
According to (\ref{eq:2-1-2}), we have

\begin{equation}\label{eq:2-1-13}
\Re[\widetilde{\psi}( \mu)] = \frac{1}{2}\sum^q_{j=1} \omega_j\frac{\rho^2+\rho r \cos (t_j \pi -\theta)}{\rho^2 +r^2+2 \rho r \cos(t_j \pi -\theta)}.
\end{equation}
Let
\begin{equation*}\label{eq:2-1-14}
g_j(r, \theta) = \frac{\rho^2+\rho r \cos (t_j \pi -\theta)}{ \rho^2 +r^2+2 \rho r \cos(t_j \pi -\theta)}.
\end{equation*}
For any given $j$, one can show that
\begin{equation}\label{eq:5-8-1}
g_j(r, \theta)- \frac{1}{2}= \frac{\rho^2-r^2}{2\left[ (\rho +r\cos(t_j \pi -\theta))^2+(r \sin(t_j \pi -\theta))^2 \right]}
\end{equation}
Note that the denominator of the right hand in (\ref{eq:5-8-1}) is always positive. It is readily to see that 
$$g_j(r, \theta)>  \frac{1}{2}$$
for  $r \in [0, \rho)$ and $\theta \in (-\pi, \pi]$, in which case $\mu$ is enclosed by $\Gamma$, and 
$$g_j(r, \theta)<  \frac{1}{2}$$
for  $r \in (\rho, +\infty]$ and $\theta \in (-\pi, \pi]$, in which case $\mu$ is located outside $\Gamma$.

On the other hand, it is well-known that $\sum^q_{j=1} \omega_j=2$ \cite{DR84, TP13}. Therefore,
\begin{equation*}\label{eq:2-1-19}
\Re[\widetilde{\psi}(\mu)] = \frac{1}{2}\sum^q_{j=1} \omega_j g_j(r, \theta) > \frac{1}{2}
\end{equation*}
when $\mu$ is enclosed by $\Gamma$, and 
$$\Re[\widetilde{\psi}(\mu)] < \frac{1}{2}$$
 when $\mu$ is located outside $\Gamma$. 
\end{proof}

We use Figure \ref{Fig:2-1} to depict the function $\Re[\widetilde{\psi}(\mu)]$.  The figure well demonstrates the conclusion in Theorem \ref{Th:2-1-3}.

\begin{figure}
\begin{center}
\includegraphics[width=13cm]{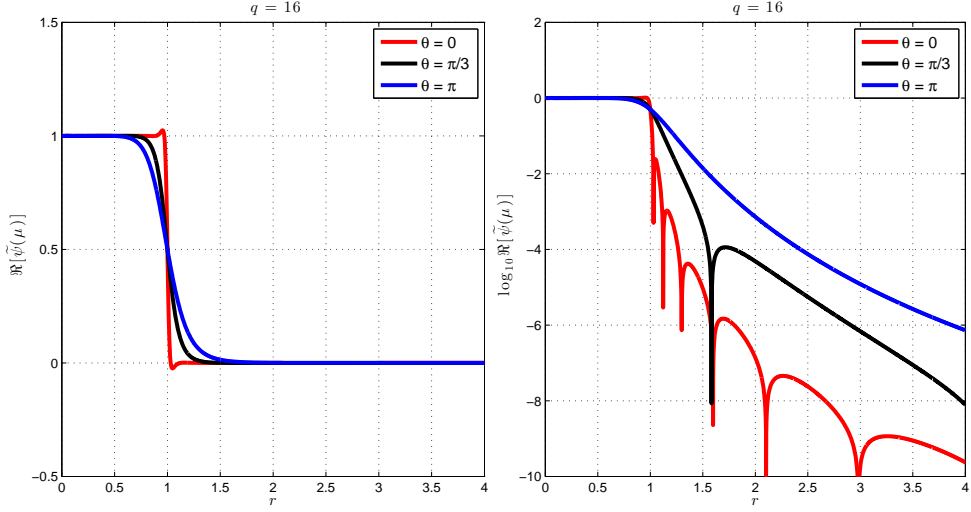}
\caption{This figure illustrates the function $\Re[\widetilde{\psi}(\mu)]$ (cf. (\ref{eq:2-1-13})) when $r \in [0, 4]$. The circle $\Gamma$ has radius $\rho = 1$ with center at the origin. We set the degree of Gauss-Legendre quadrature rule $q =16$. The left picture shows the general shape of the function, and the right one shows the logarithmic scale.}
\label{Fig:2-1}
\end{center}
\end{figure}

Observe (\ref{eq:2-1-25}), (\ref{equ:7-26-3}), and (\ref{eq:8-1-1})--(\ref{eq:2-1-2}), we see that
\begin{equation*}\label{eq:15-2-11}
D_{(i, i)}=\widetilde{\psi}(\lambda_i),\quad i = 1, \ldots d.
\end{equation*}
Due to this, in the following we always refer to $D_{(i, i)}$ as the diagonal entry of $D$ that corresponds to eigenvalue $\lambda_i, i = 1,\ldots, d$.
In viewing of Theorem \ref{Th:2-1-3}, the diagonal entries of $D$ can be divided into two categories, that is,
 $\{\Re[D_{(i,i)}]> \frac{1}{2}\}_{i = 1}^s$ on one group and $\{\Re[D_{(j,j)}]< \frac{1}{2}\}_{ j = s+1}^n$ on the other. This fact is the first finding of our work, which implies that the number $s$ can be obtained by counting the diagonal entries of $D$ whose real parts are larger than $\frac{1}{2}$. According to (\ref{eq:8-1-1}), getting $D$ requires to compute the Weierstrass canonical form of $zB-A$. However, as was suggested in \cite{BDDRV00}, the Weierstrass canonical form is not suitable for numerical computation.

In our work, we present an alternative method to obtain $\{D_{(i, i)}\}_{i=1}^s$, which does not need to compute the Weierstrass canonical form of $zB-A$. As a result, by exploiting the first finding, we can exactly compute the number of eigenvalues of (\ref{eq:1-1}) inside $\Gamma$. The alternative method is our second finding, since it needs the help of a technique proposed in \cite{YCY14}, we briefly introduce the technique in next section before presenting the second finding.

\section{Finding an upper bound for the number of eigenvalues inside $\Gamma$}\label{sec:number}
In \cite{YCY14},  a method based on contour integral  was proposed for finding a good upper bound for the number of eigenvalues inside a given region. Meanwhile, it can produce an approximate projection onto the eigenspace corresponding to the eigenvalues inside the given region.

The method first uses a stochastic estimation of the trace of spectral projector $Q$ (cf.  (\ref{eq:2-2-1})) to obtain an estimation of $s$. By $Y_p\sim {\sf N}_{n\times p}(0,1)$, we mean that $Y_p$ is an $n\times p$ random matrix with independent and identically
distributed entries drawing from standard normal distribution ${\sf N}(0,1)$.  By (\ref{equ:7-26-4}), one can easily verify that
\begin{eqnarray}
\frac1p{\mathbb E}[{\rm trace} (Y_p^*  Q Y_p)]&=& {\rm trace}( Q) =
{\rm trace}( S_{(:,1:s)} ( S^{-1})_{(1:s,:)})\nonumber \\
 &=&{\rm trace}(( S^{-1})_{(1:s,:)}  S_{(:,1:s)}) = {\rm trace}( I_s)=s. \nonumber \label{trace}
\end{eqnarray}
Therefore, 
\begin{equation}\label{eq:5-17-1}
s_0:=\lceil \frac1p{\rm trace} (Y_p^*QY_p)\rceil
\end{equation}
 provides an estimation for $s$, see \cite{futa, SFT, YCY14} for more details.
With this knowledge on hand, a method was then given in \cite{YCY14} to seek a
good upper bound $s_1$ of $s$. Wanting to derive the method, we need the following lemma.

\begin{lemma}[\cite{YCY14}]\label{lem:8-8-1}
Let $Y\in\mathbb{R}^{n\times m}$. If the entries of
$Y$ are continuous random numbers that are independent and identically
distributed (i.i.d.), then the matrix
$( S^{-1})_{(1:m,:)}Y$ is almost surely nonsingular.
\end{lemma}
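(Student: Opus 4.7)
The plan is to view $p(Y) := \det\bigl((S^{-1})_{(1:m,:)}\,Y\bigr)$ as a polynomial in the $nm$ entries of $Y$ and invoke the standard fact that the zero set of a nontrivial polynomial in $\mathbb{R}^{nm}$ has Lebesgue measure zero. Setting $M := (S^{-1})_{(1:m,:)} \in \mathbb{C}^{m\times n}$, the Leibniz formula makes it clear that $\det(MY)$ is a homogeneous polynomial of degree $m$ in the entries of $Y$, with (complex) coefficients determined by $M$.

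The key step is to exhibit some $Y_0$ for which $p(Y_0)\ne 0$, so that $p$ is nontrivial. Since $S^{-1}$ is nonsingular, its first $m$ rows are linearly independent, which means $M$ has full row rank $m$. Hence there exist column indices $i_1 < \cdots < i_m$ such that the $m\times m$ submatrix $M(:,[i_1,\ldots,i_m])$ of $M$ is nonsingular. Taking $Y_0\in\mathbb{R}^{n\times m}$ to be the $n\times m$ matrix whose $k$-th column is the standard basis vector $e_{i_k}$, one gets $MY_0 = M(:,[i_1,\ldots,i_m])$, so $\det(MY_0)\ne 0$. Thus $p$ is nontrivial.

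The only minor subtlety is that the coefficients of $p$ are complex. Writing $p(Y) = p_R(Y) + \sqrt{-1}\,p_I(Y)$ with $p_R,p_I$ real polynomials in the $nm$ real entries of $Y$, at least one of $p_R,p_I$ is not identically zero, and its zero set in $\mathbb{R}^{nm}$ has Lebesgue measure zero by the usual inductive argument on polynomials. Since $\{Y : p(Y) = 0\} \subseteq \{Y : p_R(Y) = 0\} \cap \{Y : p_I(Y) = 0\}$, this set also has Lebesgue measure zero. Because the entries of $Y$ are continuous i.i.d.\ random variables, their joint law is absolutely continuous with respect to Lebesgue measure on $\mathbb{R}^{nm}$, and hence $\mathbb{P}\bigl(p(Y) = 0\bigr) = 0$, which is exactly the almost-sure nonsingularity of $(S^{-1})_{(1:m,:)}Y$. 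The argument is essentially routine; there is no real obstacle beyond keeping track of real versus complex coefficients when invoking the measure-zero statement.
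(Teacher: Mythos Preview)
Your proof is correct. The paper does not actually prove this lemma; it merely quotes the statement from reference \cite{YCY14} and uses it without argument. Your approach---writing $\det\bigl((S^{-1})_{(1:m,:)}Y\bigr)$ as a polynomial in the entries of $Y$, exhibiting a nonvanishing point via the full row rank of $(S^{-1})_{(1:m,:)}$, and then invoking the measure-zero property of the zero set of a nontrivial polynomial together with absolute continuity of the law of $Y$---is the standard and expected argument. The care you take in splitting into real and imaginary parts to handle possibly complex $S^{-1}$ is appropriate, and the observation that $\{p=0\}=\{p_R=0\}\cap\{p_I=0\}$ is contained in the zero set of whichever of $p_R,p_I$ is nontrivial is exactly what is needed.
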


Let $s^{\dag}$ be a positive integer and $ Y_{s^{\dag}} \sim {\sf N}_{n\times s^{\dag}}(0,1)$. Consider
$$
 U_{s^{\dag}}=QY_{s^{\dag}} = \frac{1}{2\pi \sqrt{-1}} \oint_\Gamma  (zB-A)^{-1}B dz Y_{s^{\dag}} = S_{(:,1:s)} (S^{-1})_{(1:s,:)} Y_{s^{\dag}}.
$$
Hence $ U_{s^{\dag}}$ is the projection of $ Y_{s^{\dag}}$ onto ${\rm span}\{S_{(:,1:s)}\}$, which implies $\rank( U_{s^{\dag}} ) \leq s$. With this in mind, if $\rank( U_{s^{\dag}} ) = s^{\dag}$, then we have $s^{\dag} \leq s$. Otherwise, if
$\rank( U_{s^{\dag}} ) < s^{\dag}$, we can conclude that $s= \rank( U_{s^{\dag}} )$ with the help of {Lemma} \ref{lem:8-8-1}, and thereby $s < s^{\dag}$.
Based on these arguments, the following algorithm was proposed in \cite{YCY14} for finding a good upper bound for $s$. Meanwhile, a projection matrix onto the eigenspace ${\rm span}\{S_{(:,1:s)}\}$ is produced, which will play an important role in the resulting method given in next section.\\

\begin{algorithm}\label{alg:2}
Input  an increasing factor $\alpha > 1$ and the size $p$ of sample vectors. The function ``\textsc{Search}" outputs $ s_1$, an upper bound of the number of eigenvalues
$s$ inside $\Gamma$, and a projection matrix $ U_1  \in {\mathbb C}^{n \times  s_1}$
onto ${\rm span}\{S_{(:,1:s)}\}$.
\end{algorithm}
\vspace{.2cm}
\begin{tabbing}
x\=xxx\= xxx\=xxx\=xxx\=xxx\=xxx\kill
\>Function $[ U_1, s_1] = \textsc{Search}( A,  B, \Gamma,  \alpha, p)$ \\
\>1. \> Pick $ Y_p \sim {\sf N}_{n \times p}(0,1)$
 and compute $\displaystyle{ U = \frac{1}{2\pi \sqrt{-1}} \oint_\Gamma  (zB-A)^{-1}B dz Y_p}$\\
\>\> by the $q$-point Gauss-Legendre quadrature rule.\\
\>2.\>  Set $s_0 = \lceil \frac{1}{p}{\rm trace}( Y_p^*  U)\rceil$ and $s^{\star} = \min(\max(p, s_0), n)$.\\
\>3.\> If $s^{\star} > p$\\
\>4.\>\> Pick $\hat{Y} \sim {\sf N}_{n \times (s^{\star} -p)}(0,1)$ and compute
$\displaystyle{\hat{U} = \frac{1}{2\pi \sqrt{-1}} \oint_\Gamma (zB-A)^{-1}B dz\hat{Y}}$ \\
\>\>\>by the $q$-point Gauss-Legendre quadrature rule.\\
\>5.\>\> Augment $\hat{U}$ to $U$ to form $ U = [ U, \hat{U} ] \in {\mathbb C}^{n \times s^{\star}}$.\\
\>6.\>Else\\
\>7.\>\>Set $s^{\star} = p$. \\
\>8.\> End\\
\>9.\> Compute $ U\Pi=U_1R_1$: the rank-revealing QR decomposition \cite{tony} of $ U$. \\
\>10.\>Set  $ s_1= {\rm rank}( R)$. \\
\>11.\> If $ s_1<s^{\star}$,  stop.  Otherwise, set $p = s_1$ and $s^{\star} = \lceil \alpha s_1\rceil$. Then go to Step 3.\\

\end{tabbing}
\vspace{.2cm}

In practice, by (\ref{eq:8-4}), we see that $U$ (line 5) formed in the last iteration in Algorithm \ref{alg:2} is
\begin{eqnarray}
U & = & \widetilde{Q}Y_{s^\star}= S\left[\frac{1}{2} \sum^{q}_{j=1}\omega_j(z_j-c)D(z_j)\right]S^{-1}Y_{s^\star}=SDS^{-1}Y_{s^{\star}} \nonumber \\
 & = &  [ D_{(1, 1)}S_{(:,1)},\ldots, D_{(d, d)}S_{(:, d)}, 0,\ldots, 0]S^{-1}Y_{s^{\star}},\label{eq:2-3-1}
\end{eqnarray}
where $Y_{s^{\star}} \sim {\sf N}_{n \times s^{\star}}(0,1)$ with $s^{\star}> s_1$. According to Lemma \ref{lem:8-8-1}, we know that $S^{-1}Y_{s^\star}$ is full-rank, then it follows from (\ref{eq:2-3-1}) that
 \begin{equation}\label{eq:2-3-2}
 {\rm span}\{U_1\}= {\rm span}\{U\} = {\rm span}\{S_{(:,  \mathcal{I})}\},
\end{equation}
where $\mathcal{I}$ is an index set and its cardinality is $s_1$. It has been shown in  previous section that $\{\Re[D_{(i,i)}]> \frac{1}{2}\}_{i=1}^s$, thus we can conclude that $\{1, 2, \ldots, s\}\subset \mathcal{I}$.

\section{Counting the eigenvalues inside $\Gamma$}
In this section, we present the resulting method for counting the eigenvalues inside a given disk in the complex plane.

Below is the second finding of our work.

\begin{theorem}\label{Th:2-2-5}
Let $U_1$ be the projection matrix computed by Algorithm \ref{alg:2}.
Let
\begin{equation*}\label{eq:2-2-10}
U_2 = QU_1=\frac{1}{2\pi \sqrt{-1}} \oint_\Gamma  (zB-A)^{-1}B dz U_1.
\end{equation*}
Compute $U_2$ by the $q$-point Gauss-Legendre quadrature rule, i.e.,
\begin{equation}\label{eq:2-3-10}
U_2\approx \widetilde{U}_2 = \widetilde{Q}U_1=  \frac{1}{2} \sum^{q}_{j=1}\omega_j(z_j-c)(z_j  B- A)^{-1} BU_1,
\end{equation}
and define the $s_1\times s_1$ matrix
\begin{equation}\label{eq:2-2-11}
M = U_1^*\widetilde{U}_2.
\end{equation}
Then $\{D_{(i, i)}\}_{i=1}^s$ are the eigenvalues of $M$.
\end{theorem}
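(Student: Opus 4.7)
The plan is to reduce $M$ to a similarity of a diagonal matrix built from the entries of $D$ indexed by $\mathcal{I}$, using the factorization $\widetilde{Q} = SDS^{-1}$ recorded in (\ref{eq:8-4}) together with the span identity (\ref{eq:2-3-2}).

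First I would note that Step~9 of Algorithm~\ref{alg:2} is a rank-revealing QR decomposition, so $U_1$ has orthonormal columns, i.e., $U_1^{*}U_1 = I_{s_1}$. Because ${\rm span}\{U_1\} = {\rm span}\{S_{(:,\mathcal{I})}\}$ and both matrices have $s_1$ linearly independent columns, there exists a nonsingular $s_1 \times s_1$ matrix $W$ with $U_1 = S_{(:,\mathcal{I})} W$.

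Next I would compute $\widetilde{U}_2$. Since $D$ is diagonal by (\ref{eq:2-1-25})--(\ref{equ:7-26-3}), a short bookkeeping check gives $DS^{-1}S_{(:,\mathcal{I})} = D(I_n)_{(:,\mathcal{I})} = (I_n)_{(:,\mathcal{I})}\Lambda_{\mathcal{I}}$, where $\Lambda_{\mathcal{I}} = \mathrm{diag}\{D_{(j,j)} : j \in \mathcal{I}\}$. Consequently,
\[ \widetilde{U}_2 = SDS^{-1}U_1 = S(I_n)_{(:,\mathcal{I})}\Lambda_{\mathcal{I}} W = S_{(:,\mathcal{I})}\Lambda_{\mathcal{I}} W = U_1 W^{-1}\Lambda_{\mathcal{I}} W. \]
Left-multiplication by $U_1^{*}$ together with orthonormality then yields $M = U_1^{*}\widetilde{U}_2 = W^{-1}\Lambda_{\mathcal{I}} W$, so $M$ is similar to the diagonal matrix $\Lambda_{\mathcal{I}}$ and its eigenvalues are precisely $\{D_{(j,j)}\}_{j \in \mathcal{I}}$.

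Finally, recalling from the discussion following (\ref{eq:2-3-2}) that $\{1,2,\ldots,s\} \subset \mathcal{I}$, I would conclude that $\{D_{(i,i)}\}_{i=1}^{s}$ appears on the diagonal of $\Lambda_{\mathcal{I}}$ and therefore lies in the spectrum of $M$, as the statement asserts. The main obstacle is justifying the existence and nonsingularity of $W$: this relies on (\ref{eq:2-3-2}), which in turn rests on Lemma~\ref{lem:8-8-1} to guarantee that $S^{-1}Y_{s^{\star}}$ has full rank, and hence that $S_{(:,\mathcal{I})}$ spans exactly the $s_1$-dimensional column space of $U_1$. Beyond this, the remaining manipulations are routine linear algebra exploiting the diagonal structure of $D$.
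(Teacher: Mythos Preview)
Your proof is correct and follows essentially the same route as the paper's: both arguments show that $M$ is similar to the diagonal matrix $\mathrm{diag}\{D_{(i,i)}:i\in\mathcal{I}\}$ via a change of basis between $U_1$ and $S_{(:,\mathcal{I})}$, using $\widetilde{Q}=SDS^{-1}$ and the orthonormality $U_1^{*}U_1=I_{s_1}$ coming from the QR step. The only cosmetic difference is that the paper introduces a permutation matrix $P$ to split $D$ into blocks $D_1,D_2$, whereas you use the column-selection identity $S^{-1}S_{(:,\mathcal{I})}=(I_n)_{(:,\mathcal{I})}$ directly; your $W$ is also the inverse of the paper's, but this is immaterial.
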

\begin{proof}
Since $U_1$ is computed by Algorithm \ref{alg:2}, by (\ref{eq:2-3-2}), we can find a nonsingular matrix, say, $W$ such that
\begin{equation*}\label{eq:2-2-12}
S_{(:,\mathcal{I})} = U_1 W.
\end{equation*}
Let $\bar{\mathcal{I}}$ be the index set $\{1, 2, \ldots, n\}\setminus \mathcal{I}$, then there exists a permutation matrix $P$ such that
\begin{equation*}\label{eq:2-2-13}
SP = [S_{(:, \mathcal{I})}, S_{(:, \mathcal{\bar{I}})}].
\end{equation*}
Note that $D$ is a diagonal matrix, thereby $P^*DP$ is also a diagonal matrix and can be written as
\begin{equation*}\label{eq:2-2-14}
P^*DP = \begin{bmatrix}
D_1     &  0   \\
 0     &  D_2
\end{bmatrix},
\end{equation*}
where the diagonal entries of $D_1$ consist of $D_{(i, i)}, i\in \mathcal{I}$.

By (\ref{eq:8-4}), (\ref{eq:2-3-10}) and (\ref{eq:2-2-11}), we have
\begin{eqnarray*}
M & = & U_1^*\widetilde{U}_2 =  U_1^* (SDS^{-1})U_1\\
 & = &  U_1^*(SP P^*DP P^*S^{-1})U_1\\
 & = & U_1^*[S_{(:, \mathcal{I})}D_1(S^{-1})_{(\mathcal{I},:)}+S_{(:, \bar{\mathcal{I}})}D_2(S^{-1})_{(\bar{\mathcal{I}},:)}](S_{(:, \mathcal{I})}W^{-1})\\
 & = & U_1^*S_{(:, \mathcal{I})}D_1W^{-1}\\
 & = &  U_1^*(U_1W)D_1W^{-1}\\
 & = & WD_1W^{-1}.
\end{eqnarray*}
Since $W$ is nonsingular, the matrices $M$ and $D_1$ have the same eigenvalues, which are $\{D_{(i, i)}\}_{i\in\mathcal{I}}$. Note that $\{1, 2,\ldots, s\}\subset \mathcal{I}$,  therefore $\{D_{(i, i)}\}_{i=1}^s$ are the eigenvalues of $M$. 
\end{proof}

Theorem \ref{Th:2-2-5} tells us that $\{D_{(i, i)}\}_{i = 1}^s$ can be obtained via computing the eigenvalues of $M$ (cf. (\ref{eq:2-2-11})). We have shown in Section 2 that the real parts of $\{D_{(i, i)}\}_{i = 1}^s$, which correspond to the eigenvalues inside $\Gamma$, are larger than $\frac{1}{2}$, and the real parts of the rest diagonal entries of $D$ are smaller than $\frac{1}{2}$. Motivated by these facts, we find the number $s$ via counting the eigenvalues of $M$ whose real parts are larger than $\frac{1}{2}$. Summarize this idea, below we give the complete algorithm for computing the number of eigenvalues inside $\Gamma$.
\vspace{0.2cm}

\begin{algorithm}\label{alg:6}
Input an increasing factor $\alpha >1$ and the size $p$ of sample vectors. The function ``\textsc{Count\_Eigs}" computes the number of eigenvalues of (\ref{eq:1-1}) that are located inside circle $\Gamma$.
\end{algorithm}
\vspace{.2cm}
\begin{tabbing}
x\=xxx\= xxx\=xxx\=xxx\=xxx\=xxx\kill
\>Function $s = \textsc{Count\_Eigs}( A,  B, \Gamma,  \alpha, p)$ \\
\>1.\> Call $[ U_1, s_1] = \textsc{Search}( A,  B, \Gamma,  \alpha, p)$.\\
\>2.\> Compute $\widetilde{U}_2$ in (\ref{eq:2-3-10}), and set $M = U_1^* \widetilde{U}_2$.\\
\>3.\>Compute the eigenvalues of $M$, and set $s$ to be the number of the computed  \\
\>\> eigenvalues whose real parts are larger than $\frac{1}{2}$.
\end{tabbing}
\vspace{.2cm}

As with other contour-integral based eigensolvers \cite{polizzi, ss, ST07, YCY14}, the dominant work of Algorithm \ref{alg:6} is solving generalized shifted linear systems of form
    \begin{equation}
\label{eq:2-4-6}
(z_jB-A)X_j = BY,
\end{equation}
which can be solved by any method of choice. Since the quadrature nodes $z_j$ and the columns of the right-hand sides of (\ref{eq:2-4-6}) are independently,
Algorithm \ref{alg:6} has good scalability in modern parallel architectures.

The information about the number of eigenvalues inside the target region is crucial to the implementation of the contour-integral based eigensolvers. Algorithm \ref{alg:6} can integrate with this kind of eigensolvers \cite{polizzi,  ss, ST07, YCY14} to provide them with this important information.  For example, if we use the contour-integral based eigensolver proposed in \cite{YCY14} to compute the eigenvalues inside $\Gamma$, $U_1$ and $\widetilde{U}_2$ need to be computed in the first iteration and the second, respectively. As a result, to get $s$, the extra work is constructing the matrix $M$ (cf. (\ref{eq:2-2-11})) and then computing the eigenvalues of $M$. Once the number $s$ is obtained, it can help to detect whether all eigenvalues inside $\Gamma$ are found in the subsequent iterations and determine when to stop the iteration process. For clarity, we summarize the idea of integrating Algorithm \ref{alg:6} with the eigensolver proposed in \cite{YCY14} by the following algorithm.

\begin{algorithm}\label{alg:5}
Input $ A,  B \in {\mathbb C}^{n \times n}$, an increasing factor $\alpha >1$, the size $p$ of sample vectors, a circle $\Gamma$, a convergence tolerance
$\epsilon$,  and ``max\_iter" to control the
maximum number of iterations. The function ``{\sc Eigenpairs}"
computes eigenpairs $(\tilde{\lambda}_i, \tilde{{\bf x}}_i)$ of (\ref{eq:1-1}) that satisfies
\begin{equation}\label{con-cre_2}
\tilde{\lambda}_i \ {\rm inside} \ \Gamma
\quad {\rm and} \quad \frac{\| A\tilde{{\bf x}}_i - \tilde{\lambda}_i  B\tilde{{\bf x}}_i\|_2}{\| A \tilde{{\bf x}}_i\|_2+
\| B \tilde{{\bf x}}_i\|_2} < \epsilon.
\end{equation}
The results are stored in vector $\Lambda$ and matrix $X$. \end{algorithm}
\vspace{.2cm}
\begin{tabbing}
x\=xxx\= xxx\=xxx\=xxx\=xxx\=xxx\kill
\> Function $[ \Lambda,  X] = \textsc{Eigenpairs}( A,  B, \Gamma, \epsilon, max\_iter)$\\
\>1.\> Call $[ U_1, s_1] = \textsc{Search}( A,  B, \Gamma,  \alpha, p)$.\\
\>2.\> Compute $\widetilde{U}_2$ in (\ref{eq:2-3-10}), and let $M = U_1^* \widetilde{U}_2$. Set $s$ to be the number of the \\
\>\>eigenvalues of $M$ whose real parts are larger than $\frac{1}{2}$.\\
\>3.\>For $k = 2,\cdots, max\_iter$\\
\>4.\>\> Compute QR decompositions: $\widetilde{U}_k = U_1R_1$ and $B\widetilde{U}_k = U_2R_2.$\\
\>5.\>\>Form $ \widetilde{A} =  U_2^*  A  U_1$ and $\widetilde{B} =  U_2^*  B  U_1$.\\
\>6.\>\> Solve the projected eigenproblem $\widetilde{A} {\bf y} = \tilde{\lambda} \widetilde{B} {\bf y}$ of size $ s_1$ to obtain eigenpairs\\
\>\>\>$\{(\tilde{\lambda}_i, {\bf y}_i)\}_{i=1}^{s_1}$. Set  $ \tilde{{\bf x}}_i =  U_1{\bf y}_i, i = 1, 2, \ldots,  s_1$. \\
\>7.\>\> Set $ \Lambda=\left[ \  \right]$ and $X=\left[ \  \right]$.\\
\>8.\>\> For $i = 1: s_1$\\
\>9.\>\>\>If $(\tilde{\lambda}_i,\tilde{ {\bf x}}_i)$ satisfies (\ref{con-cre_2}), then $ \Lambda = [\Lambda, \tilde{\lambda}_i]$ and $ X =[X, \tilde{{\bf x}}_i]$.\\
\>10.\>\>End\\
\>11.\>\>If there are $s$ eigenpairs satisfying (\ref{con-cre_2}), stop. Otherwise, compute\\
\>\>\> $U_{k+1} =\frac{1}{2\pi \sqrt{-1}} \oint_\Gamma  (zB-A)^{-1}B dz U_1$ by the $q$-point Gauss-Legendre \\
\>\>\>quadrature rule: $
U_{k+1}\approx \widetilde{U}_{k+1} =  \frac{1}{2} \sum^{q}_{j=1}\omega_j(z_j-c)(z_j  B- A)^{-1} BU_1$.\\
\>12.\>End \\
\end{tabbing}

Below we give some remarks on above algorithm.
\begin{itemize}
  \item[1.] The first two steps can be viewed as using Algorithm \ref{alg:6} to determine the number of eigenvalues inside $\Gamma$. In addition, a projection matrix is also produced, i.e., $\widetilde{U}_2$.
   \item[2.] Since $\widetilde{U}_2$ is a projection matrix onto ${\rm span}\{S_{(:, 1:s)}\}$, it can be used to construct a projected eigenproblem to compute approximate eigenpairs. Therefore, in Step 3, the for-loop starts from $k=2$.
  \item[3.] In Step 11,  the number $s$ computed in Step 2 helps to detect whether all $s$ approximate eigenpairs satisfying the prescribed accuracy. If it is, we stop the iteration process.
\end{itemize}

\section{Numerical Experiments}\label{sec:experiments}
In this section, we give some numerical experiments to illustrate the viability
of our new method.  All computations are carried out in \textsc{Matlab}
version R2012b on a MacBook with an Intel Core i5 2.5 GHz processor and 8 GB RAM.

In the experiments, as for computing the generalized shifted linear systems of the form (\ref{eq:2-4-6}), we first use the \textsc{Matlab} function \texttt{lu} to compute the LU decomposition of $z_j B-A,  j=1, 2, \ldots, q$, and then perform the triangular substitutions to get the corresponding solutions.

\textsf{Experiment\ 5.1:}
Our method (Algorithm \ref{alg:6}) is motivated by two findings: (i) for matrix $D$ (cf. (\ref{eq:8-1-1})), $\Re[D_{(i, i)}]  > \frac{1}{2}$ if $D_{(i, i)}$ correspond to the eigenvalues enclosed by $\Gamma$,  and $\Re[D_{(i, i)}]$ are less than $\frac{1}{2}$ if $D_{(i, i)}$ correspond to the eigenvalues outside $\Gamma$;
(ii) $D_{(i, i)}$ are the eigenvalues of $M$ (cf. (\ref{eq:2-2-11})) if they correspond to the eigenvalues within $\Gamma$.  This experiment is devoted to illustrating these two findings.

Let $\Lambda = \texttt{diag}([0.1:0.1:0.8])$, $S = \texttt{randn}(8)$, the matrices $A$ and $B$ are given by
\begin{equation*}\label{eq:5-1-1}
A = S\Lambda S^{-1},\qquad B = \texttt{eye}(8).
\end{equation*}
Here \texttt{diag}, \texttt{randn}, and \texttt{eye} are \textsc{Matlab} commands.
Obviously, for the problem under consideration, the eigenvalues are $0.1, 0.2, \ldots, 0.8$. Let $\Gamma$ be a circle with center at origin and radius $\rho = 0.401$. Suppose that we are interested in the number of eigenvalues inside $\Gamma$. Obviously, there are 4 eigenvalues are located inside $\Gamma$. Note that the eigenvalue $0.4$ is located inside $\Gamma$ and close to boundary of the disk surrounded by $\Gamma$.

In this experiment, we select the number of quadrature points $q = 32$. According to  (\ref{eq:8-1-1}), now $D$ is given by
\begin{equation}\label{eq:5-1-2}
D = \sum_{i=1}^{32}\omega_i z_{i}(z_i I_8 -\Lambda)^{-1},
\end{equation}
where $\omega_i$ are the weights associated with quadrature nodes $z_i$.
We take the size of sample vectors $p = 6$, thus the starting basis in function \textsc{Search} is $Y_6 =  \texttt{randn}(8, 6)$. Since the size of test problem is small and the number of columns of $Y_6$ is already larger than the number of eigenvalues inside $\Gamma$, we just run one iteration when preforming function \textsc{Search} to get projection matrix $U_1$. As a result, the matrix $M$ defined in (\ref{eq:2-2-11}) is of size $6\times 6$.

\begin{table}
\centering
\caption{The real parts of the diagonal entries of $D$ and the ones of the eigenvalues of $M$.}
\footnotesize{
\noindent
\begin{tabular}{c|cc}
$i$& $\Re[(D_{(i,i)})]$ &  $\Re[\texttt{eig}(M)]$\\
\hline
$1$ & $\underline{1.0000000000039}49$& $\underline{1.0000000000039}65$  \\
$2$ & $\underline{1.0000000000000}00$ & $\underline{1.0000000000000}12$ \\
$3$ & $\underline{0.999999999999999}$& $\underline{0.999999999999999}$ \\
$4$ & $\underline{0.8015817876596}01$& $\underline{0.8015817876596}10$ \\
$5$ & $\underline{0.0000000025256}84$& $\underline{0.0000000025256}20$ \\
$6$ & $\underline{0.0000000000043}79$& $\underline{0.0000000000043}80$  \\
$7$ & $0.000000000000001$& \\
$8$ & $-0.000000000000051$&   \\
\end{tabular}}
\label{Tab:5-1-1}
\end{table}

Since, in practical situations, we are only interested in the real parts of the diagonal entries of $D$, in the second column in \textsc{Table} \ref{Tab:5-1-1} we list $\{\Re[D_{(i, i)}]\}_{i=1}^8$ that are computed by (\ref{eq:5-1-2}). It can be seen that the $\{\Re[D_{(i, i)}]\}_{i=1}^4$, corresponding to the eigenvalues inside $\Gamma$, are larger than $0.5$.  More precisely, $\Re[D_{(i, i)}], i = 1, 2, 3$, corresponding to eigenvalues 0.1, 0.2, 0.3, respectively, approximate the theoretical value 1 sufficiently. But $\Re[D_{(4, 4)}]$ is about $0.8$, this is because it corresponds to the eigenvalue $0.4$, which is close to the boundary of the target disk. On the other hand, the real parts of $\{D_{(i, i)}\}_{i =5}^8$, corresponding to the eigenvalues outside $\Gamma$, are less than 0.5 and very close to zero, as expected of our first finding.

The third column in \textsc{Table} \ref{Tab:5-1-1} displays the real parts of the eigenvalues of $M$ in descent order. The same digits of $\Re[D_{(i, i)}]$ and the $i$th largest $\Re[\texttt{eig}(M)]$, $i = 1, \ldots, 6$ are underlined. We can see that $\Re[D_{(i, i)}], i = 1, \ldots, 6 $, agree at least fourteen digits to their counterparts in the third column. Therefore the real parts of the eigenvalues of $M$ are almost equivalent to $\Re[D_{(i,i)}], i = 1,\ldots, 6$, which justifies the second finding of our work.

\textsf{Experiment\ 5.2:}
This experiment is devoted to testing the viability of our new method.
The test matrices are available from the Matrix Market collection \cite{DGL89}. They are the real-world problems from scientific and
engineering applications. The descriptions of the matrices are presented in
\textsc{Table} \ref{Tab:5-1}, where nnz denotes the number of non-zero entries
and their condition numbers are computed by \textsc{Matlab} function \texttt{condest}.
The test matrices of different problems vary in size, spectrum and property.

\begin{table}
\caption{Test problems from Matrix Market that are used in Experiment 5.2}
\footnotesize{
\noindent
\begin{tabular}{c|llllc}
No.&Matrix & Size&nnz & Property & \texttt{cond}
\\ \hline
1& $ A$: BFW398A & $398$& $3678$&  unsymmetric &$7.58\times 10^{3}$\\
& $ B$: BFW398B  & $398$& $2910$& symmetric indefinite &$3.64\times 10^{1}$\\ \hline
2& $ A$: BFW782A& $782$ & $7514$& unsymmetric &$4.63\times 10^{3}$\\
& $ B$: BFW782B & $782$& $5982$&  symmetric indefinite &$3.05\times 10^{1}$\\ \hline
3 & $ A$: BCSSTK08 & $1074$ &12960 &symmetric positive definite & $4.77\times 10^{7}$\\
 & $ B$: BCSSTM08 & $1074$ & 1074& symmetric positive definite & $8.27\times 10^{6}$\\ \hline
 4&$ A$: BCSSTK27& $1224$& 28675& symmetric positive definite & $7.71\times 10^{4}$\\
&$ B$: BCSSTM27 & $1224$& 28675& symmetric indefinite & $1.14\times 10^{10}$\\ \hline
5&$ A$: PLAT1919 & $1919$& 17159&  symmetric indefinite & $1.40\times 10^{16}$\\
&$ B$: PLSK1919 & $1919$& 4831& skew symmetric & $1.07\times 10^{18}$\\ \hline
6&$ A$: BCSSTK13& $2003$& 42943& symmetric positive definite &$4.57\times 10^{10}$\\
&$ B$: BCSSTM13 & $2003$& 11973& symmetric positive semi-definite &Inf\\ \hline
7&$ A$: MHD4800A & $4800$& 102252& unsymmetric & $2.54\times 10^{57}$\\
& $ B$: MHD4800B & $4800$&  27520& symmetric indefinite &$1.03\times 10^{14}$\\ \hline
8&$ A$: BCSSTK25  & $15439$& 252241& symmetric indefinite & $1.28\times 10^{13}$\\
& $ B$: BCSSTM25  & $15439$&  15439& symmetric positive definite &$6.06\times 10^{9}$
\end{tabular}}
\label{Tab:5-1}
\end{table}

In this experiment, we take the parameter $q$ to be $16$. The parameters $c$ and $\rho$ are the center and the radius of circle $\Gamma$, respectively. Note that the test problems 3, 4, 6, and 8 are Hermitian problems, which means their (finite) eigenvalues are real-valued. Due to this, we choose the circles with centers lying on the real line for these test problems.

\textsc{Table} \ref{Tab:5-3} presents the numerical comparisons. $s$ is the actual number of  eigenvalues inside $\Gamma$. We first use the \textsc{Matlab} built-in function \texttt{eig} to compute all eigenvalues of the test problems, and then determine the values of $s$ according to the coordinates of computed eigenvalues. We also present the estimation $s_0$  computed by the trace formula (\ref{eq:5-17-1}) and the upper bound $s_1$ computed by Algorithm \ref{alg:2}.  Cont\_Eigs is the result computed by our new method.

The results for all eight test problems are reported in \textsc{Table} \ref{Tab:5-3}. From these data, we see that the estimation $s_0$ computed by the trace formula (\ref{eq:5-17-1}) always provide good estimation to $s$ for all test problems except for test problem 7, due to its ill-condition. $s_1$ always gives a good upper bound for $s$. It is remarkable that the result computed by our new method is the same with the exact number $s$ for each test problem, even though the test problem 7. Therefore, our new method is numerically efficient and reliable.

\begin{table}
\centering
\caption{Numerical comparison: $s$ is the exact number of eigenvalues inside $\Gamma$, $s_0$ is the estimate of $s$ by using the trace formula and $s_1$ is the upper bound computed by {Algorithm} \ref{alg:2}, and Cont\_Eigs is the result computed by our new method.}
\footnotesize{
\noindent
\begin{tabular}{c|ccccc}
No.&$(c, \rho) $&$s$ & $s_0$&$s_1$& Cont\_Eigs  \\ \hline
1& $((-6.0\times 10^{5})+\sqrt{-1} (2.0\times 10^{5}), 3.0\times 10^5)$ &120 & 136& 177 &120  \\
2&  $((-5.0\times 10^{5})+\sqrt{-1} (1.0\times 10^{5}), 2.0\times 10^5)$  & 165& 161 &228 &165\\
3&$(5.0\times 10^{5}, 3.0\times 10^{5})$  &178  &199 &190 & 178 \\
4& $(5.0\times 10^{3}, 3.0\times 10^{3})$& 160 & 134 &192 &160  \\
5& $(\sqrt{-1} (5.0\times 10^{-1}), 1.5\times 10^{-1})$&   301& 297 & 397 & 301\\
6 & $(6.0\times 10^{6}, 3.5\times 10^{6})$   & 232& 229 &262 &232 \\
7& $((-5.0\times 10^{2})+\sqrt{-1} (2.0\times 10^{2}), 4.0\times 10^2)$ & 212 & 545 &293 &212 \\
8& $(5.0\times 10^{5}, 2.0\times 10^{5})$ & 1663 & 1628 & 1749 & 1663
\end{tabular}}
\label{Tab:5-3}
\end{table}

\section{Conclusion} In this work, we develop an approach for counting the eigenvalues of (\ref{eq:1-1}) inside a given disk in the complex plane. The new method is a contour-integral based method and motivated by two findings. The computational advantage of the new method is that it is easily parallelizable. Its another promising feature is that it can integrate with the recently proposed contour-integral based eigensolvers to provide them the information of the number of eigenvalues inside the target region. How to adapt the resulting method to the nonlinear problems will be our future work.

\end{document}